\documentclass[reqno, 11pt, a4paper, oneside]{amsart}
\usepackage{amsmath}
\usepackage{amsfonts}
\usepackage{amssymb}
\usepackage[ansinew]{inputenc}
\usepackage{graphicx}
\usepackage{amsbsy}
\usepackage{fancyhdr}
\usepackage{yfonts}
\usepackage{hyperref}
\usepackage{enumerate}

\numberwithin{equation}{section}
\newtheorem{teo}{Theorem}[section]
\newtheorem{prop}[teo]{Proposition}
\newtheorem{lema}[teo]{Lemma}

\theoremstyle{definition}

\theoremstyle{remark}

\title{Bounded rational points on curves}
\author{Miguel N. Walsh}
\address{Departamento de Matemática, Facultad de Ciencias Exactas y Naturales, Universidad de Buenos Aires, 1428 Buenos Aires, Argentina}
\email{mwalsh@dm.uba.ar}
\thanks{The author was partially supported by a CONICET doctoral fellowship.}

\begin{document}

\def\Fqn{\mathbb{F}_q^n}
\def\Fq{\mathbb{F}_q}
\def\Di{\mathbb{D}}
\def\O{\mathcal{O}}

\begin{abstract}
We establish the sharp estimate $\ll_d N^{2/d}$ for the number of rational points of height at most $N$ on an irreducible projective curve of degree $d$. We deduce this from a result for general hypersurfaces that is sensitive to the coefficients of the corresponding form.
\end{abstract}

\maketitle

\section{Introduction}

We are concerned with giving uniform estimates for the number of rational points on an irreducible projective curve $C$ of degree $d$. Writing $X(C;N)$ for the number of rational points of height $ \le N$, from the consideration of rational curves we know that this quantity can grow like $N^{2/d}$. In this article we establish the conjectured tightness of this bound.

\begin{teo}
\label{clean}
The estimate
\begin{equation}
\label{bound1}
 X(C;N) \ll_{d,n} N^{2/d},
 \end{equation}
holds uniformly over all irreducible projective curves $C \subseteq \mathbb{P}^{n+1}$ of degree $d$ defined over $\mathbb{Q}$.
\end{teo}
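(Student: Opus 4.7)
My plan is to apply a refined form of Heath--Brown's determinant method, producing an auxiliary hypersurface of sharp degree via the coefficient-sensitive hypersurface bound announced in the abstract. A preliminary reduction handles the ambient dimension: for a curve $C \subseteq \mathbb{P}^{n+1}$ of degree $d$, a generic linear projection $\pi : \mathbb{P}^{n+1} \dashrightarrow \mathbb{P}^2$, defined over $\mathbb{Q}$ with bounded rational coefficients, is birational onto its image $C'$, which is an irreducible plane curve of the same degree $d$; rational points of height at most $N$ on $C$ map to rational points of height $\ll_n N$ on $C'$, with fibers bounded in terms of $d$. Hence, at the cost of a constant depending on $d$ and $n$, one may assume $C \subseteq \mathbb{P}^2$.

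Let $F \in \mathbb{Z}[x_0, x_1, x_2]$ be the defining form of $C$, and let $P_1, \ldots, P_M$ be the rational points of height $\le N$ on $C$, represented as primitive integer triples. The core task is to produce a form $G \in \mathbb{Z}[x_0, x_1, x_2]$ of degree $D$, with $D \ll N^{2/d}/d$, such that $G(P_i) = 0$ for every $i$ and $G$ is not divisible by $F$. Granted such a $G$, Bezout's theorem yields $M \le d \cdot D \ll N^{2/d}$, as claimed. The existence of $G$ is obtained by analyzing the rank of the evaluation matrix whose rows are monomials of degree $\le D$ evaluated at the $P_i$. One partitions the points according to their reductions modulo small primes $p$: points reducing to a common smooth point of $C$ modulo $p$ are $p$-adically close, which forces a large $p$-adic divisibility on the $K \times K$ minors of the evaluation matrix. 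Balancing this divisibility against the Hadamard-type upper bound for these minors (in terms of the heights of the $P_i$) allows $D$ to be chosen just large enough to guarantee a nontrivial element of the kernel, which furnishes the desired auxiliary form.

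The main obstacle, and the place where the coefficient-sensitive hypersurface bound plays its decisive role, is the removal of the $N^{\epsilon}$-loss inherent in the classical determinant method. The number of primes used, the pigeonhole losses in grouping points by reduction, and the sizes of the coefficients of $G$ all traditionally introduce factors of $N^{\epsilon}$ or of $\log N$. The coefficient-sensitive hypersurface estimate is designed precisely to upgrade the qualitative statement ``$G$ is not divisible by $F$'' to a quantitative bound relating the coefficient sizes of the auxiliary form to the number of bounded rational points it can contain; this quantitative information is what permits an optimal choice of the parameter $D$, completely free of logarithmic or $\epsilon$ inefficiencies, and thereby yields the sharp exponent $2/d$ in \eqref{bound1}.
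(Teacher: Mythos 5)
Your high-level skeleton matches the paper: a linear projection reduces the ambient dimension to $n=1$ (the paper cites \cite[Theorem 4.2]{Brow} and \cite[\S 3]{BHBS} for exactly this), and on the plane one constructs an auxiliary form $G$ of degree $\ll_d N^{2/d}$ vanishing at all the points but not divisible by $F$, after which Bezout finishes the job. However, the heart of the matter --- how to construct $G$ of the \emph{sharp} degree $\ll_d N^{2/d}$, with no $\log N$ or $N^{\varepsilon}$ loss --- is not actually explained in your proposal, and the explanation you do offer has a circularity and a factual error.

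Your middle paragraph describes the classical determinant method: group the points by their reductions modulo primes $p$, exploit $p$-adic closeness to force high $p$-adic valuation on the $s \times s$ minors, and balance this against the Hadamard upper bound so that ``$D$ can be chosen just large enough to guarantee a nontrivial element of the kernel.'' That last clause misstates the difficulty. The kernel is \emph{always} nontrivial once $D \ge d$, since $F \cdot (\text{any monomial of degree } D-d)$ vanishes on all the points; the problem is producing a kernel element \emph{not} divisible by $F$. The rank-counting version of the determinant method that you describe is precisely what Salberger \cite{S} carries out, and it yields only $N^{2/d}\log N$; nothing in your sketch goes beyond that.

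Your final paragraph then appeals to ``the coefficient-sensitive hypersurface estimate'' (i.e.\ the paper's Theorem \ref{main2}/\ref{main}) to eliminate the loss, but that estimate is precisely the statement whose proof is at issue, so invoking it here is circular. What is actually new in the paper, and what your proposal omits, is the following two-sided coefficient comparison. After a change of coordinates arranged so that the coefficient $c_f$ of $x_{n+1}^d$ satisfies $c_f \gg_{d,n} \|f\|$ (Section \ref{technical}), one restricts to the bad case where the evaluation matrix $A$ has maximal possible rank $s = |\mathcal{B}[M]| - |\mathcal{B}[M-d]|$, so that \emph{every} integer form of degree $M$ vanishing on the chosen points is a multiple $fh$ of $f$; examining the leading monomial of $h$ shows any such form has $\|P\| \ge |c_f| \gg_{d,n} \|f\|$, a \emph{lower} bound. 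On the other hand, the Bombieri--Vaaler refinement of Siegel's lemma (Theorem \ref{BV}) produces a nonzero kernel vector whose sup-norm is \emph{upper}-bounded by $(D^{-1}\sqrt{|\det(AA^T)|})^{1/(r-s)}$, where $D$ is the gcd of the $s\times s$ minors. Salberger's global determinant estimate (Theorem \ref{Sal2}) gives a strong lower bound for $D$, hence a strong upper bound for that sup-norm. For $M$ of size (\ref{M}), the upper bound falls below the lower bound $\gg \|f\|$, a contradiction; this forces the rank to be submaximal and hands you the desired $G$. Without the Bombieri--Vaaler ingredient, the change of coordinates making $c_f$ dominant, and this explicit comparison, your argument does not improve on $N^{2/d}\log N$.
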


We say a polynomial $f \in \mathbb{Z}[x_0,x_1,\ldots,x_{n+1}]$ is primitive if its coefficients have no common prime divisor. We write $X(f;N)$ for the number of points of height $\le N$ on the zero locus of $f$ and $\left\| f \right\|$ for the absolute value of the largest coefficient of $f$. Notice that we would expect $X(f;N)$ to be smaller the larger the value of $\left\| f \right\|$ is. Indeed, we have the following strengthening of Theorem \ref{clean}.

\begin{teo}
\label{main2}
The estimate
$$ X(f;N) \ll_d N^{2/d} (\log \left\| f \right\|+O_d(1))\left\| f \right\|^{-\frac{1}{d^2}}+O_d(1),$$
holds uniformly over all primitive irreducible homogeneous polynomials $f \in \mathbb{Z}[x_0,x_1,x_2]$ of degree $d$.
\end{teo}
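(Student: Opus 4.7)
My plan is to apply the determinant method of Heath-Brown and Salberger, tracking the height $\|f\|$ of the defining form throughout. The target is to produce, for a parameter $D$ to be chosen, an auxiliary polynomial $\tilde g \in \mathbb{Z}[x_0,x_1,x_2]$ of degree $D$ that (i) vanishes on every rational point of $\{f=0\}$ of height at most $N$, (ii) is not divisible by $f$, and (iii) has height controlled by $N$, $D$ and $\|f\|$. Given such a $\tilde g$, Bezout's theorem bounds the number of common zeros of $f$ and $\tilde g$ by $dD$, and optimisation in $D$ closes the argument.

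To build $\tilde g$, write $K = X(f;N)$ and list primitive integer representatives $P_1,\ldots,P_K \in \mathbb{Z}^3$ of the rational points, with $\|P_i\|_\infty \le N$. I would work in the quotient lattice $R_D = V_D / f V_{D-d}$, where $V_D$ denotes the integer homogeneous forms of degree $D$; its rank equals $s_D = dD - \binom{d-1}{2}$ for $D \ge d$. Assemble the evaluation matrix $M \in \mathbb{Z}^{K \times s_D}$ on a basis of $R_D$. For each prime $p$ of good reduction for $\{f=0\}$, partition the $P_i$ by their class in $\{f=0\}(\mathbb{F}_p)$ and expand the basis in a local parameter on each class; this forces every $s_D \times s_D$ subdeterminant of $M$ to be divisible by $p^{e_p}$, with $e_p$ controlled by the residue-class sizes. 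Summed over primes $p \le P$ and compared with the Hadamard bound $\log|\det| \le s_D D \log N + O_d(s_D\log s_D)$, this produces a nonzero $g \in R_D$ that vanishes on all of $S$.

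The height $\|f\|$ now enters when one chooses an integer lift $\tilde g \in V_D$ of $g$. A Gauss-type inequality of the form $\|fh\| \gg_d \|f\|\,\|h\|$ shows that every nonzero element of $fV_{D-d}$ has height at least a constant times $\|f\|$, so Siegel's lemma applied to the coset of $fV_{D-d}$ containing $g$ yields a representative $\tilde g \notin fV_{D-d}$ whose height improves on the naive estimate by a factor that is a negative power of $\|f\|$. Making this precise and balancing $D$ against $P$ and $\|f\|$ is what should produce the optimal choice $D \asymp N^{2/d}\|f\|^{-1/d^2}$ up to logarithms; Bezout then gives $K \le dD$ of the required form, the factor $\log\|f\| + O_d(1)$ absorbing the logarithmic loss from the determinant step, and the additive $O_d(1)$ taking care of the degenerate case $K < s_D$.

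The main obstacle is precisely the sharp height estimate in the previous paragraph. The Heath-Brown--Salberger argument delivers $g \in R_D$ more or less off the shelf, but extracting the exponent $-1/d^2$ in $\|f\|$ demands an arithmetic study of the successive minima of the quotient lattice $R_D$, which must beat the naive Siegel bound by the right power of $\|f\|$. Geometrically, a larger $\|f\|$ leaves more ``room'' inside $V_D$ modulo multiples of $f$ for short forms, and the specific exponent $1/d^2$ should reflect the intersection-theoretic balance between the degree $d$ of $f$ and the degree $D$ of the auxiliary curve.
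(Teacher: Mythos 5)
Your outline matches the paper's strategy closely: the quotient $R_D = V_D/fV_{D-d}$ is exactly the paper's framework (the paper phrases this as the rank condition $s = |\mathcal{B}[M]|-|\mathcal{B}[M-d]|$), the $p$-adic divisibility of subdeterminants is Salberger's Theorem \ref{Sal1}/\ref{Sal2}, the Hadamard bound and the Bombieri--Vaaler form of Siegel's lemma are both used, and the deduction of Theorem \ref{main2} from the degree-$D$ auxiliary form is Bezout, just as you say.

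The genuine gap is the step you yourself flag as ``the main obstacle.'' The inequality $\|fh\| \gg_d \|f\|\,\|h\|$ is false as stated: any Gelfond/Mahler-type multiplicativity bound carries an implicit constant that degrades with $\deg(fh)$, and here $\deg h = D - d$ grows like $N^{2/d}$, which is fatal. (A univariate illustration: $f = (x-1)^2$, $h = (1+x+\cdots+x^D)^2$, gives $\|fh\| = 2$ while $\|f\|\,\|h\| \sim D$.) Even the weaker statement $\|fh\| \gg_d \|f\|$, which is all your argument actually needs since $\|h\|\ge 1$, is not available off the shelf with a constant uniform in $\deg h$. The paper's resolution is the content of Section \ref{technical}: one first applies a bounded unimodular change of coordinates so that the coefficient $c_f$ of $x_{n+1}^d$ satisfies $c_f \gg_{d,n} \|f\|$ (Lemmas \ref{box}--\ref{compact} show such a matrix always exists). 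After this normalization, one looks at the largest monomial $W$ of $h$ in right-to-left lexicographic order; the monomial $x_{n+1}^d W$ of $fh$ then has coefficient exactly $c_f\cdot w$ with $w\in\mathbb{Z}\setminus\{0\}$ the coefficient of $W$ in $h$, so $\|fh\| \ge |c_f| \gg_{d,n}\|f\|$ with a constant independent of $\deg h$. That elementary leading-monomial observation, made available by the coordinate change, is the missing ingredient; it replaces the ``arithmetic study of successive minima of $R_D$'' you gesture at. You should also note the preliminary reduction to $f$ absolutely irreducible (needed for Salberger's lemma at primes of good reduction), which the paper handles at the start of Section \ref{argument} by observing that if $f$ factors over $\overline{\mathbb{Q}}$ one can directly write down a degree-$d$ integer form vanishing on $X(\mathbb{Q})$ and not divisible by $f$.
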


Theorem \ref{clean} is a consequence of Theorem \ref{main2}. Indeed, this is evident when $n=1$, while the general result can be obtained from this and a projection argument contained in \cite[Theorem 4.2]{Brow} (see also \cite[\S 3]{BHBS}). On the other hand, Theorem \ref{main2} follows from applying Bezout's theorem to the $n=1$ case of the following result for hypersurfaces, which improves on a result of Salberger \cite[Theorem 1.2]{S} and will be the main goal of this article.

\begin{teo}
\label{main}
Let $f \in \mathbb{Z}[x_0,x_1,\ldots,x_{n+1}]$ be a primitive irreducible homogeneous polynomial of degree $d$ defining a hypersurface $X$. Then, there exists some homogeneous $g \in \mathbb{Z}[x_0,x_1,\ldots,x_{n+1}]$ of degree 
\begin{equation}
\label{boundf}
\ll_{d,n}  N^{\frac{n+1}{n d^{1/n}}} \left( \log \left\| f \right\|+O_{d,n}(1) \right) \left\| f \right\|^{-n^{-1}d^{-1-1/n}}+O_{d}(1),
\end{equation}
not divisible by $f$, vanishing at every rational point of $X$ of height $\le N$. 
\end{teo}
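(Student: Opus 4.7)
The plan is to deploy the global determinant method of Bombieri--Pila, Heath--Brown and Salberger, tracking at each step the dependence on the coefficient size $\|f\|$. Fix a degree parameter $D$ intended to be of the order given in \eqref{boundf}, let $V_D$ denote the $\mathbb{Z}$-module of degree-$D$ forms in $\mathbb{Z}[x_0,\ldots,x_{n+1}]$ modulo $f$, of rank $s = s(D) \asymp_n dD^n/n!$, and fix a $\mathbb{Z}$-basis $\phi_1,\ldots,\phi_s$. Picking primitive integral representatives $\mathbf{x}_1,\mathbf{x}_2,\ldots$ of the rational points of $X$ of height $\le N$, form the integer matrix $M = (\phi_j(\mathbf{x}_i))$. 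If $\mathrm{rank}(M) < s$ then a nontrivial $\mathbb{Z}$-linear combination of the $\phi_j$ vanishes on every $\mathbf{x}_i$, which after stripping any factor of $f$ yields the $g$ we seek. The whole task is therefore to rule out $\mathrm{rank}(M) = s$ when $D$ satisfies \eqref{boundf}.

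Supposing the rank saturates, some $s \times s$ minor $\Delta \in \mathbb{Z} \setminus \{0\}$ satisfies $|\Delta| \ge 1$. Hadamard's inequality gives the upper bound $|\Delta| \ll_{d,n} N^{sD}$. The lower bound is $p$-adic: for each prime $p$, partition the $s$ rows by their residue class in $\bar X(\mathbb{F}_p)$, and at each smooth class representative choose $n$ local $p$-adic analytic parameters of $X$. A unimodular change of basis of $V_D$ matched to the Hilbert function of the local ring at the representative produces a block structure in which the $j$-th column of each coset block is divisible by $p^{\mu_j}$; summing over cosets of size $k_c$ yields a divisibility of shape
$$
v_p(\Delta) \;\ge\; c_{d,n}\sum_c k_c^{1+1/n} \;\ge\; c_{d,n}\, s\, (s/|\bar X(\mathbb{F}_p)|)^{1/n}
$$
by convexity. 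Summing the corresponding $\log p$ contributions over primes $p \in [P_0,P]$ for which $\bar f$ is geometrically irreducible, and comparing with the Hadamard bound, yields an upper bound on $D$. Optimising $P$ at the scale $N^{(n+1)/(nd^{1/n})}\|f\|^{-n^{-1}d^{-1-1/n}}$ recovers \eqref{boundf}.

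The dependence on $\|f\|$ enters through two refinements of the standard argument. First, for any prime where $\bar f$ is geometrically irreducible and $\bar X$ is smooth, the Lang--Weil estimate gives $|\bar X(\mathbb{F}_p)| \le p^n + O_{d,n}(p^{n-1/2})$; the primes where this fails must divide a nonzero integer depending on the coefficients of $f$ (an iterated resultant or the discriminant of $f$), of height polynomial in $\|f\|$, so there are at most $O_{d,n}(\log \|f\|)$ of them. Discarding these primes costs exactly the $\log \|f\|$ factor in \eqref{boundf}. Second, by taking the lower cutoff $P_0$ of the order of $\|f\|^{1/d^{1+1/n}}$, one ensures that all primes considered sit in the regime where $\bar X$ is non-degenerate and the Lang--Weil estimate is essentially sharp, producing the claimed saving $\|f\|^{-n^{-1}d^{-1-1/n}}$.

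The main obstacle I anticipate is the uniform $p$-adic analysis around primes comparable to $\|f\|^{1/d}$, where $\bar X$ may become singular or cease to be geometrically irreducible at particular cosets of interest. Without losing the sharp exponent, this requires either restricting to a smooth open subscheme and handling the singular locus by induction on dimension, or developing a substitute divisibility argument via partial derivatives and Newton polygon data of $f$. Balancing this against the counting of exceptional primes is the delicate point on which the sharp form of \eqref{boundf} rests.
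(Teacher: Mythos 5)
Your framework is right as far as it goes: form the matrix of monomial evaluations, exploit the dichotomy between rank deficiency (giving $g$ outright) and rank saturation, and run the $p$-adic determinant method of Salberger against a Hadamard bound, with the $\log\|f\|$ factor attributable to the $O_{d,n}(\log\|f\|)$ primes dividing a Noether/discriminant-type integer of height $\|f\|^{O(1)}$. But the proposed mechanism for the power saving $\|f\|^{-n^{-1}d^{-1-1/n}}$ does not work. Restricting the $p$-adic divisibility argument to primes $p\ge P_0$ can only \emph{lose}: the relevant sum $\sum_{p\le s^{1/n}}\frac{\log p}{p}\approx \log(s^{1/n})$ loses $\log P_0$ if the lower tail is discarded, so a cutoff of order $\|f\|^{1/d^{1+1/n}}$ would produce a weaker bound, not a stronger one. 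Moreover, enlarging $P$ beyond $\asymp s^{1/n}$ yields no further divisibility, so there is no free parameter to optimise. The comparison $1\le|\Delta|\le N^{sD}$, with $p$-adic lower bounds on $v_p(\Delta)$ for good $p$, gives at best Salberger's $N^{2/d}\log N$ and cannot by itself manufacture a negative power of $\|f\|$.

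The ingredient you are missing is where the paper departs from the standard argument. Rather than using the trivial bound $|\Delta|\ge 1$ for a single $s\times s$ minor, the paper applies the Bombieri--Vaaler refinement of Siegel's lemma to the full system $A\mathbf{x}=0$. This produces a nonzero degree-$M$ integer form $P$ vanishing on $\xi$ with
$$\left\|P\right\|^{\,r-s}\ \le\ D^{-1}\sqrt{\left|\det\!\left(AA^{T}\right)\right|},$$
where $D$ is the gcd of the $s\times s$ minors (exactly the quantity controlled by Salberger's lemma). If $f\nmid P$ we are done; otherwise $P=fh$ with $h$ integral by Gauss's lemma, and here the second missing ingredient enters: one first performs a bounded unimodular change of coordinates (Section~3 of the paper, via a compactness argument) ensuring the $x_{n+1}^d$-coefficient $c_f$ of $f$ satisfies $c_f\gg_{d,n}\|f\|$. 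Then the leading monomial of $P=fh$ has coefficient $c_f w$ with $w\in\mathbb{Z}\setminus\{0\}$, forcing $\|P\|\gg_{d,n}\|f\|$. Feeding this into the Bombieri--Vaaler inequality replaces the trivial $1\le|\Delta|$ by $\|f\|^{\,r-s}\,D\ll\sqrt{|\det(AA^T)|}$, and with $r-s\sim M^{n+1}/(n+1)!$ this is precisely what produces the term $-\frac{\log\|f\|}{d(n+1)}$ on the right side of the final inequality and hence the factor $\|f\|^{-n^{-1}d^{-1-1/n}}$ in \eqref{boundf}. Without the coordinate normalisation and without Bombieri--Vaaler your argument has no way to convert ``every form vanishing on $\xi$ is a multiple of $f$'' into a quantitative constraint involving $\|f\|$, and the claimed saving cannot be recovered along the route you describe.
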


\subsection{Previous work}

The estimates given above refine or extend previous results in the literature. The well-known work of Heath-Brown \cite{HB} provides the bound
$$ X(C;N) \ll_{d,n,\varepsilon} N^{2/d+\varepsilon},$$
for every $\varepsilon > 0$ (this is obtained for plane curves in \cite{HB}, while the deduction for arbitrary $n$ is performed by Broberg in \cite{Brob}; see also \cite[Theorem 4.2]{Brow} and \cite{M}). He accomplishes this by means of a $p$-adic determinant method inspired by the corresponding work of Bombieri and Pila \cite{BP} in the affine case. The sharp estimate (\ref{clean}) was later obtained in particular cases. Browning and Heath-Brown \cite{BHB} establish this for conics, while Rault \cite{R} obtains this result for singular rational curves satisfying additional hypothesis on the resultant and discriminant.

The case of positive genus is dealt with by the work of Ellenberg and Venkatesh \cite{EV}, who establish in general the estimate
$$ X(f;N) \ll_{d,n,\varepsilon} N^{2/d+\varepsilon} (\log \left\| f \right\|+O_{d,n}(1))\left\| f \right\|^{-\frac{1}{d^2}}+O_{d}(1).$$
This motivates Theorem \ref{main2} which is a sharpening of this result. Combining this estimate with a descent on the Jacobian of the curve they obtain a power saving over the bound (\ref{bound1}) for curves of positive genus. It should be remarked that improvements for large values of $\left\|f \right\|$ are also present in other parts of the literature (e.g. \cite{HBT}).

Of particular importance to us is the remarkable work of Salberger \cite{S} establishing a conjecture of Serre. For this purpose, he introduces a 'global' determinant method and uses this to obtain the estimate
$$ X(C;N) \ll_{d,n} N^{2/d} \log N,$$
which prior to this work was the best known bound towards Theorem \ref{clean}.

Finally, we notice that Theorem \ref{main} strengthens similar results in \cite{EV,HB,S}.

\subsection{Strategy of the proof}
\label{strategy}

Although by the aforementioned results Theorem \ref{clean} was known for several families, our methods do not require any assumption on the curve. On the other hand, we make use of the global determinant method introduced by Salberger in \cite{S}, as well as an estimate of Bombieri and Vaaler \cite{BV} on the size of integer solutions to linear systems of equations. Our methods thus provide an alternative way of employing this global determinant method. We review these results in Section \ref{tools}.

The proof proceeds as follows. We are trying to find an integer polynomial vanishing at every point of $X$ of small height. This is equivalent to solving an integer system of equations, which by the result of Bombieri and Vaaler admits a solution bounded in terms of certain determinants associated with this system. If we choose a subset of these points lying in general position, then we may employ Salberger's methods to get a strong bound on these determinants. On the other hand, if every polynomial vanishing on this set is divisible by $f$ this provides us with a lower bound for the coefficients of these polynomials in terms of those of $f$. This allows us to derive an absurd upon comparison.  This argument is carried out in Section \ref{argument}.

It turns out however, that in order to implement this strategy we need to ensure that the polynomial $f$ is of an appropriate form (basically, we need the coefficient of the monomial $x_{n+1}^d$ to be essentially the largest one of $f$). In Section \ref{technical} we show how a suitable change of coordinates allows us to reduce Theorem \ref{main} to this particular case.

\medskip
\noindent \emph{Acknowledgments.} The author would like to thank Harald Helfgott and Akshay Venkatesh for providing some helpful references.

\section{Preliminaries}
\label{tools}

We use the usual asymptotic notation $X=O(Y)$ to mean that $|X| \le C |Y|$ for some constant $C$, and we may also write $X \ll Y$ instead of $X=O(Y)$. We will usually deal with an integer parameter $N$ going to infinity, in which case the notation $o(X)$ represents a quantity $Y$ such that $|Y|/|X| \rightarrow 0$ as $N \rightarrow \infty$. The implicit constants in the above notations will only be allowed to depend on the parameters $d,n$ in Theorem \ref{main}, and this dependence will be indicated by a subscript. Also, we use the notation $X \sim Y$ for $X \ll Y \ll X$.

In this section we recall some results that will be needed in the rest of the article.

\subsection{Linear equations over the integers}

We begin with the following refinement of the Thue-Siegel lemma due to Bombieri and Vaaler.

\begin{teo}{\cite[Theorem 1]{BV}}
\label{BV}
Let $\sum_{k=1}^r a_{mk} x_k=0$, $m=1,\ldots,s$, be a system of $s$ linearly independent equations in $r>s$ unknowns, with integer coefficients $a_{mk}$. Then, there exists a nontrivial integer solution $(x_1,\ldots,x_r)$ satisfying the bound
$$ \max_{1 \le i \le r} |x_{i}| \le  \left( D^{-1} \sqrt{\left|\det \left( A A^T \right) \right|} \right)^{\frac{1}{r-s}}.$$
Here $A=(a_{mk})$ is the $s \times r$ matrix of coefficients, $A^T$ its transpose, and $D$ is the greatest common divisor of the determinants of the $s \times s$ minors of $A$.
\end{teo}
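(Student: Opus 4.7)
The plan is to approach this via the geometry of numbers, applied to the lattice
$$\Lambda = \{x \in \mathbb{Z}^r : Ax = 0\},$$
which under the hypothesis that the $s$ rows of $A$ are linearly independent has full rank $r-s$ inside the subspace $V := \ker_{\mathbb{R}} A \subseteq \mathbb{R}^r$. The strategy is to compute the covolume $\det(\Lambda)$ exactly and then produce a short nonzero point of $\Lambda$ by applying Minkowski's theorem to a slice of a symmetric cube, using Vaaler's cube-slicing inequality to avoid losing constants.

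For the covolume computation I would use the classical duality identity $\det(\Lambda) = \det(\Lambda')$ for the companion lattice $\Lambda' := \mathbb{Z}^r \cap V^{\perp}$, valid for lattices arising as the integer points of a rational subspace. Since $V^{\perp}$ is the span of the rows of $A$, these rows generate a sublattice $L \subseteq \Lambda'$ whose Gram matrix is precisely $AA^T$, so $\det(L) = \sqrt{\det(AA^T)}$. To evaluate the index $[\Lambda' : L]$, I would invoke the Smith normal form $A = P \Sigma Q$ with $P \in GL_s(\mathbb{Z})$, $Q \in GL_r(\mathbb{Z})$ and elementary divisors $d_1, \ldots, d_s$; after applying the unimodular change of variables $Q$, the pair $L \subseteq \Lambda'$ becomes the standard inclusion $\bigoplus_i d_i \mathbb{Z} \subseteq \mathbb{Z}^s$, so $[\Lambda' : L] = \prod_i d_i$. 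The elementary-divisor characterization of the minors then gives $\prod_i d_i = D$, whence
$$\det(\Lambda) = \det(\Lambda') = D^{-1}\sqrt{\det(AA^T)}.$$

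Finally I would extract a short solution by applying Minkowski's theorem inside $V$ to the centrally symmetric convex body $B_T := [-T,T]^r \cap V$. This produces a nonzero $x \in \Lambda$ with $\max_i |x_i| \le T$ as soon as $\mathrm{vol}_V(B_T) \ge 2^{r-s}\det(\Lambda)$. The essential input here is Vaaler's cube-slicing inequality, which guarantees $\mathrm{vol}_V(B_T) \ge (2T)^{r-s}$ uniformly over all $(r-s)$-dimensional subspaces $V$; choosing $T = (D^{-1}\sqrt{\det(AA^T)})^{1/(r-s)}$ then closes the argument. The main obstacle is the index computation $[\Lambda' : L] = D$: this is precisely the refinement that separates Bombieri--Vaaler from Siegel's lemma, where one uses only $\det(L)$ and thereby loses the factor $D^{-1}$. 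Vaaler's slicing inequality is a secondary but indispensable ingredient needed to obtain the clean constant $1$ in the final bound.
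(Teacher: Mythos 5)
The paper does not prove this statement; it is quoted verbatim as Theorem~1 of Bombieri and Vaaler and used as a black box (the author only remarks in passing that this special case ``allows a simpler proof'' than the general versions in \cite{BV}). So there is nothing in the paper to compare against. Your argument is, however, correct, and it is essentially the original Bombieri--Vaaler proof. Each of the three steps checks out: the covolume duality $\det(\Lambda)=\det(\Lambda')$ for the companion lattices $\mathbb{Z}^r\cap V$ and $\mathbb{Z}^r\cap V^\perp$ of a rational subspace; the Smith normal form computation, where the rows $q_1,\ldots,q_s$ of $Q$ extend to a $\mathbb{Z}$-basis of $\mathbb{Z}^r$ so that $\Lambda'=\bigoplus_{i\le s}\mathbb{Z}q_i$ and $L=\bigoplus_{i\le s}d_i\mathbb{Z}q_i$, giving $[\Lambda':L]=\prod d_i=D$ by the elementary-divisor description of the gcd of the $s\times s$ minors; and the extraction step via the compact form of Minkowski's theorem in $V$, with Vaaler's cube-slicing inequality $\mathrm{vol}_{r-s}\bigl([-T,T]^r\cap V\bigr)\ge (2T)^{r-s}$ supplying exactly the constant needed to avoid any loss. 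You correctly identify $D$ as the refinement over Siegel's lemma and the cube-slicing inequality as the tool that makes the constant $1$; this is a faithful and complete reconstruction of the Bombieri--Vaaler argument.
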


The article \cite{BV} contains a number of generalizations of this result concerning basis of solutions and arbitrary number fields. For the purposes of the present note it may be relevant to mention that Theorem \ref{BV} as stated allows a simpler proof than these general results.

\subsection{A global determinant method}

We now turn to the second result we shall need. Let $X$ be a hypersurface defined by a primitive absolutely irreducible homogeneous polynomial $f \in \mathbb{Z}[x_0,x_1,\ldots,x_{n+1}]$. A crucial ingredient of our proof will be an estimate on determinants due to Salberger (results of this kind date back to the work of Heath-Brown \cite{HB}).

\begin{teo}{\cite[Lemmas 1.4 and 1.5]{S}}
\label{Sal1}
Let $p$ be a prime for which the reduction of $X$ is absolutely irreducible. Let $(\xi_1,\ldots,\xi_s)$ be a tuple of rational points in $X$, let $F_1,\ldots,F_s \in \mathbb{Z}[x_0,x_1,\ldots,x_{n+1}]$ be forms with integer coefficients and write $\Delta$ for the determinant of $\left( F_i(\xi_j) \right)_{ij}$. Then, there exists some 
$$e \ge n!^{1/n} \frac{n}{n+1} \frac{s^{1+1/n}}{p+O_{d,n}\left(p^{1/2} \right)}+O_{d,n}(s),$$
such that $p^e|\Delta$.
\end{teo}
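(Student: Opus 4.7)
The plan is to implement Salberger's \emph{global determinant method}: group the points $\xi_1,\ldots,\xi_s$ according to their reduction modulo $p$ and exploit the local $p$-adic geometry at each cluster to force divisibility in blocks of columns of the matrix $(F_i(\xi_j))$. Since row and column operations over $\mathbb{Z}_p$ preserve $v_p(\Delta)$, one is free to replace the forms $F_i$ by $\mathbb{Z}_p$-linear combinations without affecting the quantity being estimated.

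First I would invoke a Lang--Weil estimate: since $\bar X := X \bmod p$ is absolutely irreducible of dimension $n$, it has $N_p = p^n + O_{d,n}(p^{n-1/2})$ smooth $\mathbb{F}_p$-points, while its singular locus contributes $O_{d,n}(p^{n-1})$ points (a negligible sink for the reduction map). Partitioning $(\xi_1,\ldots,\xi_s)$ by reduction yields clusters $\mathcal{C}_P$ indexed by the smooth points $P \in \bar X(\mathbb{F}_p)$, with sizes $s_P$ satisfying $\sum_P s_P \ge s - O_{d,n}(s)$. Next I would work locally at each smooth $P$: choose a regular system of parameters $t_1,\ldots,t_n$ for $\mathcal{O}_{\bar X, P}$, lift them to $p$-adic analytic coordinates $T_1,\ldots,T_n$ on a neighborhood of $P$ in $X(\mathbb{Z}_p)$, and Taylor-expand each form as $F \equiv \sum_\alpha c_\alpha(F;P) T^\alpha$ on that neighborhood. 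Since $v_p(T_i(\xi_j)) \ge 1$ for $\xi_j \in \mathcal{C}_P$, the monomial $T^\alpha$ evaluated at such a $\xi_j$ has $p$-adic valuation $\ge |\alpha|$. Using row operations over $\mathbb{Z}_p$ adapted to a fixed ordering of the clusters, one can simultaneously triangularize the block of $(F_i(\xi_j))$ corresponding to each cluster against a monomial basis ordered by total degree. Since the Hilbert function of the regular local ring gives $\binom{n+e}{n} = e^n/n! + O_n(e^{n-1})$ monomials of degree $\le e$, the $k$-th monomial in this order has degree $\ge (kn!)^{1/n} - O_n(1)$, so the contribution of $\mathcal{C}_P$ to $v_p(\Delta)$ is at least
$$ \sum_{k=1}^{s_P} \bigl( (kn!)^{1/n} - O_n(1) \bigr) \ge \frac{n \cdot n!^{1/n}}{n+1}\, s_P^{1+1/n} - O_{d,n}(s_P). $$

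Finally I would sum over clusters and optimize. By convexity of $x \mapsto x^{1+1/n}$, subject to $\sum_P s_P \le s$ with $N_p$ available clusters, the total is minimized when the $s_P$ are distributed as evenly as possible, yielding
$$ v_p(\Delta) \ge \frac{n \cdot n!^{1/n}}{n+1} \cdot \frac{s^{1+1/n}}{N_p^{1/n}} - O_{d,n}(s), $$
and $N_p^{1/n} = p + O_{d,n}(p^{1/2})$ converts this into the claimed lower bound on $e$. The main obstacle I expect is the simultaneous triangularization step: one must show that a single $\mathbb{Z}_p$-basis of the space of forms spanned by $F_1,\ldots,F_s$ realizes the per-cluster monomial bound at \emph{every} cluster at once, so that the local valuations add rather than compete. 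Salberger resolves this by ordering the clusters and performing a careful sequential elimination that at each stage preserves the triangular structure already achieved at previously processed clusters; this compatibility between local monomial bases is the technical heart of the argument in \cite{S}.
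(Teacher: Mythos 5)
The paper does not prove this statement: it is cited verbatim from Salberger's preprint (\cite[Lemmas 1.4 and 1.5]{S}) and used as a black box in the proof of Theorem~\ref{Sal2}, so there is no internal argument to compare your sketch against. That said, your outline is a fair reconstruction of the $p$-adic determinant method underlying Salberger's lemma (and Heath-Brown's earlier local version): cluster by reduction mod $p$, Lang--Weil to get $N_p = p^n + O_{d,n}(p^{n-1/2})$, a local Taylor expansion in a regular system of parameters at each smooth $\mathbb{F}_p$-point, the Hilbert-function count giving the $k$-th monomial degree $\geq (k\, n!)^{1/n} - O_n(1)$, and convexity to spread the clusters evenly; the constants $n!^{1/n}\cdot n/(n+1)$ and $N_p^{1/n} = p + O_{d,n}(p^{1/2})$ come out correctly.

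One point you gloss over deserves more care: you dismiss the $O_{d,n}(p^{n-1})$ singular $\mathbb{F}_p$-points of $\bar{X}$ as a ``negligible sink,'' but it is not the \emph{number} of singular points that matters so much as the number of the $\xi_j$ that reduce to them, and at a singular point one does not have a regular system of parameters, so the per-cluster valuation bound you derive simply does not apply there. In Salberger's treatment (and in Heath-Brown's for curves and surfaces) one must either bound the cluster sizes at singular points separately, or observe that the resulting loss can be absorbed into the $O_{d,n}(s)$ error term by a slightly different convexity optimization that allows a small number of clusters to contribute nothing. This is a real step, not a triviality, and as written your argument does not address it. You correctly flag the simultaneous triangularization across clusters as the technical heart; the singular-locus issue is a second such point.
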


We derive from this the following corollary.

\begin{teo}[{cf. \cite[proof of Theorem 1.2]{S}}]
\label{Sal2}
Let $(\xi_1,\ldots,\xi_s)$ be a tuple of rational points in $X$, let $F_{li} \in \mathbb{Z}[x_0,x_1,\ldots,x_{n+1}], 1 \le l \le L, 1 \le i \le s,$ be forms with integer coefficients and write $\Delta_l, 1 \le l \le L,$ for the determinant of $\left( F_{li}(\xi_j) \right)_{ij}$. Let $\Delta$ be the greatest common divisor of the $\Delta_l$. Then, we have the bound
$$ \log |\Delta| \ge \frac{n!^{1/n}}{n+1} s^{1+1/n} \left( \log s +O_{d,n}(1)-n \max \left\{ \log \log \left\| f \right\|, 0 \right\} \right).$$
\end{teo}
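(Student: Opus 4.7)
The plan is to deduce this from Theorem~\ref{Sal1} by summing $p$-adic valuations over primes of good reduction. First, I would fix any prime $p$ for which the reduction of $X$ is absolutely irreducible and apply Theorem~\ref{Sal1} to each determinant $\Delta_l$ separately. Since the lower bound $e$ produced by that theorem depends only on $p$, $s$, $d$, $n$ (not on the specific forms $F_{li}$), the same exponent works for every $l$. Therefore $p^e \mid \Delta_l$ for all $l$, which gives $p^e \mid \Delta$. Taking logarithms,
$$
\log |\Delta| \;\ge\; \sum_{p \le P,\;p \text{ good}} e_p \log p \;\ge\; \frac{n!^{1/n} n}{n+1}\, s^{1+1/n} \sum_{p \le P,\;p \text{ good}} \frac{\log p}{p + O_{d,n}(p^{1/2})} + O_{d,n}(s) \sum_{p \le P} \log p,
$$
for any cutoff $P$ at our disposal.

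Next I would choose $P = s^{1/n}$ and evaluate the two sums. By Mertens and the prime number theorem, $\sum_{p \le P} \log p = O(P) = O(s^{1/n})$, so the error term contributes $O_{d,n}(s \cdot s^{1/n}) = O_{d,n}(s^{1+1/n})$, which can be absorbed into the $O_{d,n}(1)$ inside the parentheses of the target bound. Also, $\sum_{p \le P} \tfrac{\log p}{p} = \log P + O(1) = \tfrac{1}{n}\log s + O(1)$, and the $p^{1/2}$ correction in the denominator only affects this by $O(1)$. Multiplying the main factor $\frac{n!^{1/n} n}{n+1} s^{1+1/n}$ by $\tfrac{1}{n}\log s$ yields precisely the leading term $\frac{n!^{1/n}}{n+1} s^{1+1/n} \log s$ appearing in the conclusion.

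The main obstacle is to control the contribution lost by restricting to primes of good reduction, and this is where the $\log \log \|f\|$ term enters. The primes of bad reduction for $X$ are those dividing a certain nonzero integer (coming from a Noether-style irreducibility form) whose size is bounded polynomially in $\|f\|$, so that $\sum_{p \text{ bad}} \log p \ll_{d,n} \log \|f\|$. I would then invoke the elementary fact that if $q_1, \dots, q_k$ are primes with $\sum_i \log q_i \le L$, then $\sum_i \tfrac{\log q_i}{q_i} \ll \log L + O(1)$: splitting at $T = eL$, the primes $q_i \le T$ contribute at most $\sum_{p \le T} \tfrac{\log p}{p} = \log T + O(1)$, while the tail $q_i > T$ contributes at most $L/T = O(1)$. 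Applied with $L = O_{d,n}(\log \|f\|)$, this gives $\sum_{p \text{ bad},\, p \le P} \tfrac{\log p}{p} \ll_{d,n} \log \log \|f\| + O(1)$ when $\|f\|$ is large (and nothing when $\|f\|$ is bounded, which explains the $\max\{\log \log \|f\|, 0\}$).

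Putting these ingredients together yields
$$
\log |\Delta| \;\ge\; \frac{n!^{1/n} n}{n+1}\, s^{1+1/n} \left( \tfrac{1}{n}\log s + O_{d,n}(1) - \max\{\log \log \|f\|, 0\} \right) + O_{d,n}(s^{1+1/n}),
$$
and factoring out $1/n$ from the parenthesis reproduces the constant $\frac{n!^{1/n}}{n+1}$ and the factor $n$ in front of $\log \log \|f\|$, exactly as stated in Theorem~\ref{Sal2}.
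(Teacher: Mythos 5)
Your proof is correct and follows essentially the same route as the paper: apply Theorem~\ref{Sal1} at each good prime $p\le s^{1/n}$, sum via Mertens, and control the bad primes through the Noether bound $\prod_{p\in\mathcal{P}_X}p\ll_{d,n}\left\|f\right\|^{O_{d,n}(1)}$. The only difference is that you spell out two steps the paper leaves implicit, namely that the exponent $e$ in Theorem~\ref{Sal1} is uniform over the forms $F_{li}$ (so the power of $p$ divides the gcd) and the elementary splitting argument showing $\sum_i\log q_i/q_i\ll\log L+O(1)$ when $\sum_i\log q_i\le L$; both are accurate.
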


\begin{proof}
Write $\mathcal{P}_X$ for the set of primes for which the reduction of $X$ is not absolutely irreducible. By a result of Noether (see \cite[\S V.2, Theorem 2 A]{Sch} or \cite[Lemma 1.8]{S}), given $d$ and $n$ there exist universal forms $\Phi_1,\ldots,\Phi_l$ with integer coefficients, such that a form $F \in K[x_0,x_1,\ldots,x_{n+1}]$ of degree $d$ is absolutely irreducible over a field $K$ if and only if not all these universal forms vanish when evaluated at the coefficients of $F$ (if $K$ has positive characteristic, the corresponding reduction is to be applied to the universal forms). Since $f$ is absolutely irreducible, the value of some $\Phi_i$ at its coefficients must be nonzero and divisible by every $p \in \mathcal{P}_X$. It follows that
\begin{equation}
\label{venka}
 \prod_{p \in \mathcal{P}_X} p \ll_{d,n} \left\| f \right\|^{O_{d,n}(1)}.
 \end{equation}
Applying Theorem \ref{Sal1} for each prime $p \le s^{1/n}, p \notin \mathcal{P}_X$, we see that $\log |\Delta|$ is bounded from below by
$$  \frac{n!^{1/n}.n}{n+1} s^{1+1/n} \sum_{p \le s^{1/n}, p \notin \mathcal{P}_X} \frac{\log p}{p+O_{d,n}\left(p^{1/2} \right)} - O_{d,n}(s) \sum_{p \le s^{1/n}} \log p.$$
Using (\ref{venka}), the classical facts
\begin{equation}
\label{classic1}
\sum_{p \le x} \frac{\log p}{p} = \log x+O(1), \, \, \sum_{p \le x} \log p \sim x,
\end{equation}
and the estimate 
\begin{equation}
\label{32}
\frac{\log p}{p+O_{d,n}(p^{1/2})}-\frac{\log p}{p} \ll_{d,n} \frac{\log p}{p^{3/2}} ,
\end{equation}
we see that
$$ \sum_{p \in \mathcal{P}_X} \frac{\log p}{p+O_{d,n}(p^{1/2})} \le \max \left\{ \log \log \left\| f \right\|, 0 \right\} + O_{d,n}(1).$$
Combining this with our previous bound and using the estimates (\ref{classic1}), (\ref{32}) again, we conclude that $\log |\Delta|$ is at least
$$ \frac{n!^{1/n}}{n+1} s^{1+1/n} \left(  \log s +O_{d,n}(1)-n \max \left\{ \log \log \left\| f \right\|,0 \right\} \right),$$
as claimed.
\end{proof}

\section{Change of coordinates}
\label{technical}

In order to carry out the argument outlined in Section \ref{strategy} we will need to assume the coefficient of the monomial $x_{n+1}^d$ is essentially the largest one of $f$. The reduction of Theorem \ref{main} to this particular case is performed in the present section. With this purpose in mind we fix an arbitrary choice of $d$ and $n$. We will write $\mathcal{U}$ for the class of all primitive irreducible homogeneous polynomials $f \in \mathbb{Z}[x_0,x_1,\ldots,x_{n+1}]$ of degree $d$, and we write $\mathcal{V}$ for the set of polynomials $f \in \mathcal{U}$ satisfying
\begin{equation}
\label{V}
 c_f \gg_{d,n} \left\| f \right\|,
 \end{equation}
where the implicit constant is independent of $f$ and shall be specified later. Here $c_f$ stands for the coefficient of the monomial $x_{n+1}^d$ in $f$, while we recall that the norm $\left\| f \right\|$ gives the size of the largest coefficient of a monomial appearing in $f$.

\begin{prop}
\label{cc}
Suppose Theorem \ref{main} holds uniformly over all $f \in \mathcal{V}$. Then it also holds uniformly over all $f \in \mathcal{U}$, albeit with a possibly worst implicit constant.
\end{prop}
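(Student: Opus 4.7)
The plan is to reduce to $\mathcal{V}$ by a bounded unimodular change of coordinates that sends $e_{n+1}$ to a point where $f$ is large. First, given $f \in \mathcal{U}$, I look for an integer point $\alpha=(\alpha_0,\ldots,\alpha_n,1)$ with coordinates bounded by some $A_{d,n}$ and $|f(\alpha)| \gg_{d,n} \left\|f\right\|$. On the finite-dimensional space of homogeneous polynomials of degree $d$ in $n+2$ variables, the norm $\left\|\cdot\right\|$ is equivalent to the norm $\max_{s\in S}|f(s)|$, where $S=\{0,1,\ldots,d\}^{n+1}\times\{1\}$: knowing $f$ on $S$ determines the dehomogenization $f(x_0,\ldots,x_n,1)$ by standard multivariate interpolation, and homogeneity of degree $d$ recovers $f$ from this dehomogenization. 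So some $\alpha\in S$ has $|f(\alpha)|\gg_{d,n}\left\|f\right\|$, and $\alpha_{n+1}=1$ is automatic.

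Next, define the integer matrix $T$ by $Te_{n+1}=\alpha$, $Te_i=e_i$ for $i\le n$. Since $\alpha_{n+1}=1$, $\det T=1$, so $T$ is unimodular and $T^{-1}$ has integer entries with $\left\|T^{-1}\right\|_\infty \ll_{d,n} 1$. Put $\tilde f(x):=f(Tx)$. Then $\tilde f$ is homogeneous of degree $d$ with integer coefficients; irreducible since $T$ is invertible over $\mathbb{Q}$; and primitive because the coefficients of $f$ are integer linear combinations (via $T^{-1}$) of those of $\tilde f$, so the gcd of coefficients of $\tilde f$ divides that of $f=1$. Moreover $c_{\tilde f}=\tilde f(e_{n+1})=f(\alpha)\gg_{d,n}\left\|f\right\|$, while $\left\|\tilde f\right\|\ll_{d,n}\left\|f\right\|$ (each coefficient of $\tilde f$ is a bounded integer combination of coefficients of $f$). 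Hence $\left\|\tilde f\right\|\sim_{d,n}\left\|f\right\|$ and $c_{\tilde f}\gg_{d,n}\left\|\tilde f\right\|$, so $\tilde f\in\mathcal{V}$ for an appropriate choice of the constant in (\ref{V}).

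Now, the linear map $P\mapsto T^{-1}P$ is a height-preserving-up-to-$O_{d,n}(1)$ bijection between the rational points of the hypersurface $V(f)$ and those of $V(\tilde f)$, so $X(f;N)\le X(\tilde f;C_{d,n}N)$ for some $C_{d,n}$. Apply Theorem \ref{main} to $\tilde f\in\mathcal{V}$ to obtain $\tilde g$, of degree bounded by (\ref{boundf}) with $N$ replaced by $C_{d,n}N$ and $\left\|f\right\|$ replaced by $\left\|\tilde f\right\|$, not divisible by $\tilde f$, vanishing at every rational point of $V(\tilde f)$ of height $\le C_{d,n}N$. Set $g(x):=\tilde g(T^{-1}x)$; this has integer coefficients because $T^{-1}$ is integer, the same degree as $\tilde g$, and vanishes at every rational point of $V(f)$ of height $\le N$. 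If $f\mid g$ then, substituting $x=Ty$, we would get $\tilde f(y)=f(Ty)\mid g(Ty)=\tilde g(y)$, contradicting the choice of $\tilde g$. Because $C_{d,n}N$ and $\left\|\tilde f\right\|\sim_{d,n}\left\|f\right\|$ differ from $N$ and $\left\|f\right\|$ by $O_{d,n}(1)$ factors, the bound (\ref{boundf}) for $g$ follows (with worse implicit constants), yielding Theorem \ref{main} for $f\in\mathcal U$.

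The only nontrivial step is finding $\alpha$ in Step 1, and this is just a compactness / norm-equivalence argument on a finite-dimensional vector space; the remaining steps are formal, with the unimodularity of $T$ automatically handling primitivity and the integrality of $g$.
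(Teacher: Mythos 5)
Your argument is correct and follows the same overall strategy as the paper: a bounded unimodular change of coordinates $T$ sending $e_{n+1}$ to a point $\alpha$ where $f$ is large, verification that $f\circ T\in\mathcal V$ (including the primitivity argument via $T^{-1}$), and a formal pullback of the polynomial produced by Theorem~\ref{main}. In fact your matrix $T$ is exactly the paper's $A=I+B$. The only genuine difference is in the key technical substep of producing $\alpha$: the paper first proves a qualitative nonvanishing statement (Lemma~\ref{box}, via the observation that the evaluation map on a large enough box has full rank) and then invokes compactness of the unit sphere (Lemma~\ref{compact}) to turn this into the quantitative bound $\max_{a\in T}|g(a)|\gg_{d,n}\|g\|$, whereas you obtain the same conclusion directly by observing that $g\mapsto\max_{s\in S}|g(s)|$ with $S=\{0,\dots,d\}^{n+1}\times\{1\}$ is a norm (by tensor-product Lagrange interpolation plus homogenization) and appealing to equivalence of norms in finite dimensions. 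The two arguments are of comparable length, but yours is slightly more explicit in that it names a concrete finite grid of interpolation nodes rather than going through an existential compactness step; neither gives effective constants without further work. Both are complete and correct.
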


The rest of this section is devoted to the proof of this proposition, so we will assume from now on that Theorem \ref{main} holds over every $f \in \mathcal{V}$. We begin with the following easy lemma that allows us to apply a bounded change of coordinates to our functions.

\begin{lema}[Change of coordinates]
\label{change}
Let $A \in \text{SL}_{n+2}(\mathbb{Z})$ be a matrix with all its coefficients of size $O_{d,n}(1)$. Let $f \in \mathcal{U}$ be arbitrary. Then, if $g \in \mathbb{Z}[x_0,x_1,\ldots,x_{n+1}]$ is not divisible by $f \circ A$ and vanishes at every rational zero of $f \circ A$ of height $O_{d,n}(N)$, then $g \circ A^{-1}$ is not divisible by $f$ and vanishes at every rational zero of $f$ of height $\le N$.
\end{lema}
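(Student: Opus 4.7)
The plan is to exploit the fact that $A \in \mathrm{SL}_{n+2}(\mathbb{Z})$ makes the change of variables essentially reversible, with $A^{-1}$ also in $\mathrm{SL}_{n+2}(\mathbb{Z})$ and with bounded entries. First I would observe that since $\det A = 1$, Cramer's rule expresses the entries of $A^{-1}$ as $\pm$ determinants of $(n+1)\times(n+1)$ minors of $A$; hence $A^{-1}$ has integer entries of size $O_{d,n}(1)$. In particular, both $g \circ A^{-1}$ and $f \circ A$ are genuinely integer homogeneous polynomials of the same degree as $g$ and $f$ respectively.

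Next I would establish the vanishing property. Let $x \in \mathbb{Z}^{n+2}$ be a primitive representative of a rational zero of $f$ of height $\le N$. The vector $A^{-1}x$ is integral and, by the entrywise bound on $A^{-1}$, has height $O_{d,n}(N)$. Moreover
\begin{equation*}
(f \circ A)(A^{-1} x) = f(A A^{-1} x) = f(x) = 0,
\end{equation*}
so $A^{-1}x$ represents a rational zero of $f \circ A$ of height $O_{d,n}(N)$. The hypothesis on $g$ then gives $g(A^{-1} x) = 0$, i.e. $(g \circ A^{-1})(x) = 0$, as desired.

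For the non-divisibility conclusion, I would argue by contrapositive: suppose $f \mid g \circ A^{-1}$ in $\mathbb{Q}[x_0,\ldots,x_{n+1}]$, say $g \circ A^{-1} = f \cdot h$. Composing with $A$ yields
\begin{equation*}
g = (g \circ A^{-1}) \circ A = (f \circ A) \cdot (h \circ A),
\end{equation*}
so $f \circ A \mid g$, contradicting the assumption on $g$. Since $f$ and $f\circ A$ are irreducible polynomials of the same degree, divisibility over $\mathbb{Q}$ and over $\mathbb{Z}$ amount to the same condition up to a rational scalar, so no subtlety arises here.

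The main obstacle, such as it is, amounts only to verifying that $A^{-1}$ inherits the integrality and boundedness from $A$; once this is in place, the rest is essentially the functoriality of pullback under $A$. No heavy machinery is needed, but care is required that the height of $A^{-1}x$ is controlled by an absolute multiple of $N$ (so that the quantifier ``height $O_{d,n}(N)$'' in the hypothesis on $g$ matches precisely), which is immediate from the entry bounds on $A^{-1}$.
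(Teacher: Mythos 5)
Your proposal is correct and takes essentially the same approach as the paper: the non-divisibility is handled by the same contrapositive observation ($f \mid g\circ A^{-1}$ would force $f\circ A \mid g$), and the vanishing follows by pushing a primitive representative of height $\le N$ through $A^{-1}$ and using the boundedness of the entries of $A^{-1}$. The only difference is that you spell out the Cramer's-rule justification for the integrality and boundedness of $A^{-1}$, which the paper leaves implicit.
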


\begin{proof}
It is clear that $f|g \circ A^{-1}$ would imply $f \circ A | g$. It will therefore suffice to show that $A^{-1}$ gives an injection from the set of primitive solutions of $f$ of height at most $N$ to the set of primitive solutions of $f \circ A$ of height at most $O_{d,n}(N)$, and this follows immediately from the boundedness of the coefficients of $A^{-1}$.
\end{proof} 

We will need the following simple observation to ensure that we can find a change of coordinates of an appropriate form.

\begin{lema}
\label{box}
Suppose $f \in \mathbb{C}[x_0,\ldots,x_{n+1}]$ is an homogeneous polynomial of degree $d$ vanishing at $(t_0,t_1,\ldots,t_n,1)$ for every choice of integers $|t_i|=O_{d,n}(1)$. Then $f$ must be zero.
\end{lema}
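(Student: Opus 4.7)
The plan is to dehomogenize $f$ at $x_{n+1}=1$ and apply a standard grid-vanishing lemma. Set $g(y_0,\ldots,y_n):=f(y_0,\ldots,y_n,1)\in\mathbb{C}[y_0,\ldots,y_n]$, a polynomial of total degree at most $d$. The hypothesis ``$|t_i|=O_{d,n}(1)$'' means the implicit constant is at our disposal, so we may assume it is at least $d+1$. With this choice, $g$ vanishes on the product set $S_0\times\cdots\times S_n$ with $S_i=\{-d-1,\ldots,d+1\}$, and in particular each factor has size at least $d+1$.

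I would then invoke the classical fact that if $h\in\mathbb{C}[y_0,\ldots,y_n]$ has total degree $\le d$ and vanishes on a product set $S_0\times\cdots\times S_n$ with every $|S_i|\ge d+1$, then $h\equiv 0$. The proof is a short induction on $n$: the base case $n=0$ is just that a univariate polynomial of degree $\le d$ with more than $d$ roots vanishes. For the inductive step, write $h=\sum_{j=0}^d h_j(y_1,\ldots,y_n)\,y_0^j$; fixing $(a_1,\ldots,a_n)\in S_1\times\cdots\times S_n$, the univariate polynomial $\sum_j h_j(a_1,\ldots,a_n)\,y_0^j$ has $\ge d+1$ zeros in $S_0$ and hence is identically zero, so every coefficient $h_j(a_1,\ldots,a_n)$ vanishes. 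Thus each $h_j$ has total degree $\le d$ and vanishes on $S_1\times\cdots\times S_n$, and the inductive hypothesis forces $h_j\equiv 0$. Applying this to $g$ yields $g\equiv 0$.

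Finally, for every $(x_0,\ldots,x_{n+1})\in\mathbb{C}^{n+2}$ with $x_{n+1}\ne 0$, the homogeneity of $f$ gives
$$ f(x_0,\ldots,x_{n+1})=x_{n+1}^d\,g\!\left(\tfrac{x_0}{x_{n+1}},\ldots,\tfrac{x_n}{x_{n+1}}\right)=0,$$
so $f$ vanishes on the Zariski-open set $\{x_{n+1}\ne 0\}$ and is therefore the zero polynomial. The statement is essentially a polynomial-method triviality; the only point requiring attention is parsing the $O_{d,n}(1)$ correctly, so that the side-length of the vanishing grid can be chosen to exceed $d$.
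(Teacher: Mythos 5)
Your proof is correct, and it takes a genuinely different route from the paper's. You prove the vanishing directly by dehomogenizing and then establishing the standard grid-vanishing fact (a polynomial of total degree $\le d$ vanishing on an $(n+1)$-fold product of sets each of size $\ge d+1$ is identically zero) by induction on the number of variables; this is effective and hands you an explicit admissible constant, namely $d+1$. The paper instead argues by pure linear algebra: it considers the evaluation map $\kappa$ sending a lattice point to the vector of its values on all monomials of degree $\le d$, observes that the span of $\{\kappa(y):y\in\mathbb{Z}^{n+1}\}$ sits inside a space of dimension $O_{d,n}(1)$, so a maximal linearly independent subset $\kappa(y_1),\ldots,\kappa(y_l)$ has $l=O_{d,n}(1)$, and then notes that any degree-$\le d$ polynomial vanishing at $y_1,\ldots,y_l$ must vanish on all of $\mathbb{Z}^{n+1}$ and hence be zero; a box containing $y_1,\ldots,y_l$ does the job. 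The paper's version is shorter and more conceptual (it only uses that the relevant polynomial space is finite-dimensional), but it still ends by appealing to ``vanishing on $\mathbb{Z}^{n+1}$ implies zero,'' which is essentially your grid lemma in disguise, and it gives no explicit bound on the box size. Your argument is self-contained and quantitative; either is perfectly adequate here since the paper only needs an $O_{d,n}(1)$ bound. One small thing worth noting, which you did handle correctly: the dehomogenization $f\mapsto g=f(\,\cdot\,,1)$ is a bijection between homogeneous forms of degree $d$ in $n+2$ variables and polynomials of degree $\le d$ in $n+1$ variables, so $g\equiv 0$ really does force $f\equiv 0$; the paper's parenthetical remark about $x_{n+1}^d$ is making the same point, since $x_{n+1}^d$ is the monomial dehomogenizing to the constant $1$.
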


\begin{proof}
Since $x_{n+1}^d$ fails to vanish at these points, it will suffice to show that there is an $(n+1)$-dimensional box of size $O_{d,n}(1)$ such that there is no nontrivial polynomial $g \in \mathbb{C}[x_0,\ldots,x_n]$ (not necessarily homogeneous) of degree at most $d$ vanishing at every integer point of this box. Let $\kappa$ be the application which associates to each $y \in \mathbb{Z}^{n+1}$ the tuple $\kappa(y)$ consisting of the values it takes over the monomials in $n+1$ variables of degree at most $d$. Since there are $O_{d,n}(1)$ such monomials, we see that there is a maximal linearly independent set $\kappa(y_1),\kappa(y_2),\ldots,\kappa(y_l)$ with $l=O_{d,n}(1)$. In particular, this implies that every polynomial vanishing at $y_i$ for every $1 \le i \le l$ must vanish at $\mathbb{Z}^{n+1}$ and therefore be zero. Choosing a box large enough to contain these points we get the desired result.
\end{proof}

Let $f \in \mathcal{U}$ be given. Our next step is to find some $A$ as in Lemma \ref{change} such that $f \circ A$ has a large coefficient at $x_{n+1}^d$. Since this coefficient will equal $f(a_0,a_1,\ldots,a_{n+1})$ with $a_i$ the coefficients of the last column of $A$, we want to show that there is some choice of integers $a_0,a_1,\ldots,a_{n+1} = O_{d,n}(1)$ such that
$$ f(a_0,a_1,\ldots,a_{n+1}) \gg_{d,n} \left\| f \right\|.$$
We do this in the next lemma.

\begin{lema}
\label{compact}
Let $T$ be the set of tuples of integers $(a_0,a_1,\ldots,a_n,1)$ with $a_i=O_{d,n}(1)$ for every $i$. Then, for every homogeneous $g \in \mathbb{C}[x_0,\ldots,x_{n+1}]$ of degree $d$, there exists some $a \in T$ such that
$$ |g(a)| \gg_{d,n} \left\| g \right\|.$$
\end{lema}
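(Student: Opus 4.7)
My plan is to deduce Lemma \ref{compact} from Lemma \ref{box} together with the equivalence of norms on a finite-dimensional vector space.

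Let $V$ denote the complex vector space of homogeneous polynomials of degree $d$ in $n+2$ variables. Its dimension $\binom{d+n+1}{n+1}$ depends only on $d$ and $n$. First I would fix the constant hidden in the definition of $T$ large enough that Lemma \ref{box} applies to this choice of $T$; in particular this fixes $T$ as a specific finite set depending only on $d$ and $n$.

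On $V$ I would introduce the two functions $\|\cdot\|$, the max of the absolute values of the coefficients, and $\|g\|_T := \max_{a \in T}|g(a)|$. Both are seminorms, and $\|\cdot\|$ is clearly a norm. The point of invoking Lemma \ref{box} is that $\|g\|_T = 0$ forces $g$ to vanish at every tuple $(t_0,\ldots,t_n,1)$ with $|t_i|=O_{d,n}(1)$, hence $g \equiv 0$ by that lemma; so $\|\cdot\|_T$ is in fact a norm on $V$.

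Since $V$ is finite-dimensional and depends only on $d,n$, and both norms depend only on $d,n$, the standard equivalence of norms on finite-dimensional spaces gives a constant $c_{d,n}>0$ with $\|g\|_T \ge c_{d,n}\|g\|$ for every $g \in V$. In particular, for each $g$ we can find $a \in T$ with $|g(a)| \ge c_{d,n}\|g\|$, which is the claim. There is no real obstacle here: the whole content of the argument lies in the earlier Lemma \ref{box}, which ensures that the evaluation functionals on $T$ span $V^{*}$; once that is in hand, the norm-equivalence observation is immediate and the dependence of constants on $d$ and $n$ only is automatic because the dimension of $V$ and the set $T$ are both determined by $d,n$.
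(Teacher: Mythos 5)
Your argument is correct and is essentially the paper's argument: the paper reduces by rescaling to showing $\inf_{\|g\|=1}\max_{a\in T}|g(a)|>0$ and then uses continuity of $g\mapsto\max_{a\in T}|g(a)|$ together with compactness of the unit sphere and Lemma \ref{box} for positivity, which is precisely the standard proof of equivalence of norms that you invoke as a black box. The only stylistic difference is that you package the compactness step as the norm-equivalence theorem, whereas the paper unwinds it; the substance and the reliance on Lemma \ref{box} are identical.
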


\begin{proof}
By a simple rescaling argument, it will actually suffice to show that
$$ \inf_{\left\| g \right\|=1} \max_{a \in T} |g(a)| \gg_{d,n} 1,$$
where the infimum is taken over all homogeneous $g$ of degree $d$ with $\left\| g \right\|=1$. Let $\mathbb{C}^m$ with $m=\binom{d+n+1}{n+1}$ be the space of coefficients of the homogeneous polynomials of degree $d$, so that $\left\| g \right\|=1$ corresponds to the standard sphere in the $L^{\infty}$-norm. It is clear that the application that sends each $m$-tuple of coefficients to $\max_{a \in T} |g(a)|$ is continuous, with $g$ the polynomial associated with the tuple of coefficients. By Lemma \ref{box} we know that $\max_{a \in T} |g(a)|>0$ for every nonzero $g$ of degree $d$. By compactness of the sphere, it thus also follows that
$$ \inf_{\left\| g \right\|=1} \max_{a \in T} |g(a)| \ge C,$$
for some constant $C$ depending only on $d$ and $n$, and this gives us the desired result.
\end{proof}

Given $f \in \mathcal{U}$, choose a tuple of coefficients ${\bf a}=\left\{ a_0,a_1,\ldots, a_n,1 \right\}$ of size $O_{d,n}(1)$ as in Lemma \ref{compact}. It follows that the matrix $A=I+B$ satisfies the conditions of Lemma \ref{change}, where $I$ is the identity matrix and $B$ is the matrix with $\left\{ a_0,a_1,\ldots, a_n,0 \right\}$ in the last column and $0$ everywhere else. But from the choice of ${\bf a}$ we also see that $f \circ A$ is such that the monomial $x_{n+1}^d$ has its coefficient of size $\gg_{d,n} \left\| f \right\|$. We deduce from this and the boundedness of the coefficients of $A$ that
\begin{equation}
\label{order}
 \left\| f \circ A \right\| \sim_{d,n} \left\| f \right\|,
 \end{equation}
from where we see that $f \circ A$ satisfies (\ref{V}) upon an appropriate choice of the implicit constant. 

It only remains to show that $f \circ A$ is primitive. Let $m$ be the greatest common divisor of the coefficients of $f \circ A$. Then $\frac{1}{m} \left( f \circ A \right)$ has integer coefficients, which clearly implies that $\frac{1}{m} \left( f \circ A \right) \circ A^{-1}=\frac{1}{m} f$ must also have integer coefficients. Since $f$ was already primitive, it must necessarily be $m=1$ as claimed.

We can now complete the proof of Proposition \ref{cc}. Given $f \in \mathcal{U}$, we know that there is a choice of $A$ as above such that $f \circ A \in \mathcal{V}$. Let $g$ be a polynomial as in Theorem \ref{main} vanishing on the rational zeros of $f \circ A$ of height $O_{d,n}(N)$, so that in particular the degree of $g$ is at most
\begin{align*}
&\ll_{d,n} N^{\frac{n+1}{n d^{1/n}}} \frac{\left( \log \left\|  f \circ A \right\|+O_{d,n}(1) \right)}{\left\| f \circ A \right\|^{n^{-1}d^{-1-1/n}} }+O_d(1) \\
&\ll_{d,n}  N^{\frac{n+1}{n d^{1/n}}} \frac{\left( \log \left\| f \right\|+O_{d,n}(1) \right)}{\left\| f \right\|^{n^{-1}d^{-1-1/n}}}+O_d(1),
\end{align*}
where we have used (\ref{order}). It follows from Lemma \ref{change} that $g \circ A^{-1} \in \mathbb{Z}[x_0,\ldots,x_{n+1}]$ is an homogeneous polynomial, not divisible by $f$, vanishing at the rational zeros of $f$ of height at most $N$. But since the degree of $g \circ A^{-1}$ coincides with that of $g$, we conclude that Theorem \ref{main} holds over every $f \in \mathcal{U}$ as desired.

\section{Proof of Theorem 1.3}
\label{argument}

We now proceed to the proof of Theorem \ref{main}. By the results of the previous section, we know that it will suffice to establish this over every $f \in \mathcal{V}$ and therefore we assume that $f$ lies on this space from now on. 

Assume first that $f$ is reducible over $\overline{\mathbb{Q}}$ and let $h$ be a nontrivial irreducible factor. We may write $h=\sum_{j=1}^k \lambda_j h_j$, where the $h_j$ are integer forms of the same degree, while the coefficients $\lambda_j$ are linearly independent over the rationals. Clearly, $h$ does not divide $h_1$ since, by the irreducibility of $f$ over $\mathbb{Q}$, it cannot be proportional to a form with integer coefficients. On the other hand, $h_1$ is an homogeneous form with integer coefficients that vanishes at every rational zero of $h$, by the linear independence of the $\lambda_j$. Repeating this argument for every nontrivial irreducible factor of $f$ and taking products, we obtain a form of degree $d$ and with integer coefficients, which is not divisible by $f$ and vanishes at every rational zero of $f$, yielding Theorem \ref{main} in this case. We may thus assume from now on that $f$ is absolutely irreducible.

We write $S$ for the set of rational solutions of $f$ of height at most $N$. We will let $M$ be an integer of size
\begin{equation}
\label{M}
 M \sim_{d,n}  N^{\frac{n+1}{n d^{1/n}}} \frac{\left( \log \left\| f \right\|+O_{d,n}(1) \right)}{\left\| f \right\|^{n^{-1}d^{-1-1/n}}},
 \end{equation}
 with the precise value of the implicit constant to be specified below. This integer will correspond to the degree of the polynomial $g$ we shall construct. Write ${\bf \xi}=\left\{ \xi_1,\ldots,\xi_s \right\}$ for a maximal set of elements of $S$ which is algebraically independent over polynomials of degree $M$, in the sense that if we let $A$ be the matrix for which the $i$th row is given by the values of $\xi_i$ at the different homogeneous monomials of degree $M$, then this matrix has rank $s$.

Given an integer $D$, we write $\mathcal{B}[D]$ for the set of homogeneous monomials of degree $D$, so that in particular $|\mathcal{B}[D]| = \binom{D+n+1}{n+1}$. Since the elements of $f . \mathcal{B}[M-d]$ provide linearly independent solutions vanishing on $S$, it follows that the rank $s$ of $A$ satisfies
$$ s \le |\mathcal{B}[M]|-|\mathcal{B}[M-d]|.$$
On the other hand, since $A$ has integer coefficients, if this were a strict inequality we would be able to find a polynomial in the $\mathbb{Z}$-span of $\mathcal{B}[M]$ that vanishes on $\xi$ (and therefore on $S$, by the maximality of $\xi$) but that is not divisible by $f$, concluding the proof of our result.  We may thus assume from now on that 
\begin{equation}
\label{s}
 s=|\mathcal{B}[M]|-|\mathcal{B}[M-d]|,
 \end{equation}
and we notice in particular that this implies
\begin{equation}
\label{as}
 s=\frac{dM^n}{n!}+O_{d,n} \left( M^{n-1} \right).
 \end{equation}
We will compare this with the results of Section \ref{tools} to derive a contradiction (for a sufficiently large implicit constant in (\ref{M})) and therefore obtain the desired result. 

Recall that we write $A$ for the matrix for which the $i$th row is given by the values of $\xi_i$ at the different homogeneous monomials of degree $M$. Write $D$ for the greatest common divisor between the determinants of all $s \times s$ minors of $A$. We are trying to find integer solutions to the system of linear equations given by $A$. By Theorem \ref{BV} we know that there exists some nonzero homogeneous polynomial of degree $M$ vanishing on $\xi$ with integer coefficients $\left( x_{1},\ldots,x_{r} \right)$ satisfying the inequality
\begin{equation}
\label{frame}
 \left( \max_{1 \le i \le r} |x_{i}| \right)^{r-s} \le D^{-1} \sqrt{\left|\det \left( A A^T \right) \right|} ,
 \end{equation}
with $r=|\mathcal{B}[M]|$, so in particular  
$$r-s=|\mathcal{B}[M-d]| = \frac{M^{n+1}}{(n+1)!}+O_{d,n}(M^n).$$
Since each coefficient of $A$ is bounded by $N^M$, we know that 
$$ \left|\det \left( A A^T \right) \right| \le s!(|\mathcal{B}[M]|N^{2M})^s.$$
Hence (by Stirling's formula and the bound $|\mathcal{B}[M]| \ll_{d,n} M^{n+1}$),
\begin{align}
 \log \sqrt{\left|\det \left( A A^T \right) \right|} &\le \frac{s+1}{2} \log s + \frac{(n+1)}{2}s \log M+Ms \log N+O_{d,n}(1) \notag \\
 \label{e3}
 &\le (n+2)s \log s+Ms \log N+O_{d,n}(1). 
 \end{align}
On the other hand, we may apply Theorem \ref{Sal2} over the set of $s \times s$ minors of $A$ to conclude that
\begin{equation}
\label{e2}
\log |D| \ge \frac{n!^{1/n}}{n+1} s^{1+1/n} \left( \log s +O_{d,n}(1)-n \max \left\{ \log \log \left\| f \right\|, 0 \right\} \right).
 \end{equation}
Finally, we recall that by the assumption (\ref{s}) we know that every homogeneous polynomial $P$ of degree $M$ with integer coefficients vanishing on ${\bf \xi}$ must be of the form $P=fh$ for some homogeneous polynomial $h$ of degree $M-d$, which must have integer coefficients by Gauss's lemma. Let $W$ be the greatest monomial (in right to left lexicographical order) appearing in $h$ with a nonzero coefficient and let $w$ be this coefficient. It is then clear that the monomial $x_{n+1}^dW$ appears in $P$ with $c_f w$ as coefficient, where $c_f$ is the coefficient of $x_{n+1}^d$ in $f$. Since $f \in \mathcal{V}$ and $w \in \mathbb{Z} \setminus \left\{ 0 \right\}$, we see that
$$ \left\| P \right\| \ge |c_f w| \gg_{d,n} \left\| f \right\|.$$
But this holds for every homogeneous polynomial $P$ of degree $M$ with integer coefficients vanishing on $\xi$, from where it follows that the left hand side of (\ref{frame}) is at least 
$$ \left( c_{d,n} \left\| f \right\| \right)^{\frac{M^{n+1}}{(n+1)!}+O_{d,n}(M^n)},$$
 for some constant $c_{d,n}>0$ depending only on $d$ and $n$.

Comparing with (\ref{e3}) and (\ref{e2}), we must have
\begin{align*}
\frac{n!^{1/n}}{n+1} s^{1+1/n} &\left( \log s +O_{d,n}(1)-n \max \left\{ \log \log \left\| f \right\|, 0 \right\} \right) \\
&\le Ms \log N -\left(\frac{M^{n+1}}{(n+1)!}+O_{d,n}(M^n) \right) \log \left( c_{d,n} \left\| f \right\| \right).
\end{align*}
It is clear that if $\left\| f \right\|$ is a large power of $N$ then $\frac{M^{n+1}}{(n+1)!} \log \left\| f \right\|$ will dominate and we derive a contradiction. Otherwise we see from (\ref{M}) and (\ref{as}) that $O_{d,n}\left(M^n \log \left( c_{d,n} \left\| f \right\| \right) \right) = o_{d,n}(s^{1+1/n})$ and we get the inequality,
\begin{align*}
 \frac{n!^{1/n}}{n+1} s^{1+1/n} &\left( \log s +O_{d,n}(1)-n \max \left\{ \log \log \left\| f \right\|, 0 \right\} \right) \\
 &\le Ms \log N -\frac{M^{n+1}}{(n+1)!} \log \left( c_{d,n} \left\| f \right\| \right).
 \end{align*}
Dividing everything by $Ms$ and using (\ref{as}) again, we get 
\begin{equation}
\label{absurd}
 \frac{nd^{1/n}}{n+1} \left( \log M +O_{d,n}(1)- \max \left\{ \log \log \left\| f \right\|, 0 \right\} \right) \le \log N-\frac{\log \left\| f \right\|}{d(n+1)},
 \end{equation}
where we have used the estimates  
\begin{align*}
M^{-1} \log \left( c_{d,n} \left\| f \right\| \right)&=o_{d,n}(1) \\
 \frac{s^{1/n}}{M} = \frac{d^{1/n}}{n!^{1/n}}&+O_{d,n} \left( M^{-1/n} \right).
 \end{align*}
Since (\ref{absurd}) can clearly be contradicted upon choosing the implicit constant in (\ref{M}) sufficiently large, this concludes the proof of Theorem \ref{main}.


\begin{thebibliography}{10}
\bibitem{BP} E. Bombieri and J. Pila, \emph{The number of integral points on arcs and ovals}, Duke Math. J. {\bf 59} (1989), no. 2, 337-357.
\bibitem{BV} E. Bombieri, J. Vaaler, \emph{On Siegel's lemma}, Invent. Math. {\bf 73} (1983), no. 1, 11-32.
\bibitem{Brob} N. Broberg, \emph{A note on a paper by R. Heath-Brown: ”The density of rational points on
curves and surfaces”}, J. Reine Angew. Math. {\bf 571} (2004), 159-178.
\bibitem{Brow} T. Browning, \emph{Quantitative arithmetic of projective varieties}, Progress in Mathematics, 277. Birkhäuser Verlag, Basel, 2009.
\bibitem{BHB} T. Browning, D. R. Heath-Brown, \emph{Counting rational points on hypersurfaces}, J. Reine Angew. Math. {\bf 584} (2005), 83-115.
\bibitem{BHBS} T. Browning, D. R. Heath-Brown, P. Salberger, \emph{Counting rational points on algebraic varieties}, Duke Math. J. {\bf 132} (2006), 545-578.
\bibitem{EV} J. Ellenberg, A. Venkatesh, \emph{On uniform bounds for rational points on nonrational curves}, Int. Math. Res. Not. (2005), no. 35, 2163-2181.
\bibitem{HB} D. R. Heath-Brown, \emph{The density of rational points on curves and surfaces}, Ann. of Math. (2) {\bf 155} (2002), no. 2, 553-595.
\bibitem{HBT} D. R. Heath-Brown, D. Testa, \emph{Counting rational points on cubic curves}, Sci. China Math. {\bf 53} (2010), no. 9, 2259-2268.
\bibitem{M} O. Marmon, \emph{A generalization of the Bombieri-Pila determinant method}, J. Math. Sci. (N.Y.) {\bf 171} (2010), no. 6, 736-744.
\bibitem{R} P. X. Rault, \emph{On uniform bounds for rational points on rational curves of arbitrary degree}, J. Number Theory {\bf 133} (2013), no. 9, 3112-3118.
\bibitem{S} P. Salberger, \emph{Counting rational points on projective varieties}, preprint.
\bibitem{Sch} W. M. Schmidt, \emph{Equations over finite fields. An elementary approach}, Lecture Notes in Mathematics, Vol. 536. Springer-Verlag, Berlin-New York, 1976. 
\end{thebibliography}
\end{document}